\documentclass[12pt,reqno,a4paper]{amsart}
\usepackage{extsizes}
\usepackage{blindtext}
\usepackage{fullpage}
\usepackage{mathtools}
\usepackage{longtable}
\usepackage{amsmath,amssymb,amsthm}
\usepackage{amscd}
\usepackage{bm}
\usepackage{color}
\usepackage{mathrsfs}
\usepackage{hyperref}
\usepackage{dsfont}
\usepackage{enumerate}
\usepackage{epsfig}
\usepackage{float, graphicx}
\usepackage{booktabs}
\usepackage{latexsym, amsxtra}
\usepackage{mathrsfs}
\usepackage{multicol}
\usepackage[normalem]{ulem}
\usepackage{psfrag}
\usepackage[parfill]{parskip}
\usepackage{stmaryrd}
\usepackage{tikz}
\usepackage[T1]{fontenc}
\usepackage{url}
\usepackage{verbatim}
\usepackage{indentfirst}
\usepackage{tikz-cd}
\usepackage{svg}

\makeatletter
\def\thm@space@setup{%
	\thm@preskip=2ex \thm@postskip=1.5ex
}
\makeatother

\hypersetup{hidelinks}

\newtheorem{thm}{Theorem}[section]
\newtheorem{prop}[thm]{Proposition}
\newtheorem{lem}[thm]{Lemma}

\theoremstyle{definition}

\theoremstyle{remark}
\newtheorem{rmk}[thm]{Remark}

\newcommand{\CC}{\mathbb{C}}
\newcommand{\QQ}{\mathbb{Q}}
\newcommand{\RR}{\mathbb{R}}
\newcommand{\ZZ}{\mathbb{Z}}
\newcommand{\PP}{\mathbb{P}}
\newcommand{\HH}{\mathbb{H}}

\newcommand{\PU}{\mathrm{PU}}

\newcommand{\reg}{\mathrm{reg}}

\title{Commensurability Relations Between Deligne--Mostow--Thurston and Ghazouani--Pirio Monodromy Groups}
\author{Chenglong Yu, Yiming Zhong}
\date{}

\newcommand{\Addresses}{{
		\bigskip
		\footnotesize
		
		C.~Yu, \textsc{Center for Mathematics and Interdisciplinary Sciences, Fudan University and Shanghai Institute for Mathematics and Interdisciplinary Sciences (SIMIS), Shanghai, China}\par\nopagebreak
		\textit{E-mail address}: \texttt{yuchenglong@simis.cn}
		
		\medskip
		
		Y.~Zhong, \textsc{Institute of Mathematical Sciences, ShanghaiTech University, Shanghai, China}\par\nopagebreak
		\textit{E-mail address}: \texttt{zhongym1@shanghaitech.edu.cn}
}}

\begin{document}

\begin{abstract}
We classify commensurability relations between Deligne--Mostow--Thurston monodromy groups and the sixteen arithmetic Ghazouani--Pirio monodromy groups arising from moduli of flat cone metrics on the sphere and on the torus. In both settings, the monodromy group preserves a skew-Hermitian form coming from twisted homology. We use a degeneration method to compute their determinant classes in the genus one case. The defining CM fields and determinant classes of the skew-Hermitian forms determine the commensurability relations. In particular, each of the sixteen arithmetic Ghazouani--Pirio monodromy groups is commensurable with an arithmetic Deligne--Mostow--Thurston monodromy group.
\end{abstract}

\maketitle

\setcounter{tocdepth}{1}

\section{Introduction}
 
Deligne and Mostow constructed complex hyperbolic monodromy groups as the images of monodromy representations on twisted homology with coefficients in rank-one unitary local systems on punctured projective lines \cite{deligne1986monodromy,mostow1986generalized,mostow1988discontinuous}.
Thurston studied these groups through flat cone metrics on the sphere \cite{thurston1998shapes}, and Veech extended this viewpoint to flat cone metrics on arbitrary genus surfaces \cite{veech1993flat}. In genus one, Ghazouani and Pirio constructed complex hyperbolic ball quotients associated with algebraic leaves in the moduli of flat cone metrics on elliptic curves \cite{ghazouani2017moduli,ghazouani2017flat}.
 
The discreteness and arithmeticity of Deligne--Mostow--Thurston monodromy groups are completely determined by Deligne, Mostow, and Thurston \cite{deligne1986monodromy,mostow1986generalized,mostow1988discontinuous,thurston1998shapes}. 
Ghazouani and Pirio obtained sixteen arithmetic lattices as monodromy groups \cite[\S11.3, Table 1]{ghazouani2017moduli}, but a general discreteness criterion for arbitrary Ghazouani--Pirio monodromy groups remains unknown.

A natural problem is to classify these groups up to commensurability. 
We say that two subgroups $\Gamma,\Gamma'\subset \PU(1,n)$ are commensurable up to conjugacy if $\Gamma\cap g\Gamma'g^{-1}$ has finite index in both $\Gamma$ and $g\Gamma'g^{-1}$ for some $g\in\PU(1,n)$. 
On the Deligne--Mostow--Thurston side, the commensurability classes are completely classified by Sauter, Deligne--Mostow, Kappes--M\"oller, McMullen, and Yu--Zheng \cite{sauter1990isomorphisms,deligne1993commensurabilities,kappes2016lyapunov,McMullen2017Gauss-Bonnet,yu2024comm}. On the Ghazouani--Pirio side, Ghazouani and Pirio use double covers from elliptic curves to $\PP^1$ to relate certain Ghazouani--Pirio and Deligne--Mostow--Thurston monodromy groups in $\PU(1,2)$ \cite[\S4.2.5]{ghazouani2017flat}.

The goal of this paper is to classify the commensurability relations between the arithmetic Ghazouani--Pirio examples listed in \cite[\S11.3, Table 1]{ghazouani2017moduli} and the arithmetic Deligne--Mostow--Thurston examples. 
Commensurable lattices necessarily act on complex hyperbolic balls of the same dimension. For the arithmetic lattices considered here,  the CM field is also a commensurability invariant (see Proposition \ref{prop:parity-arithmetic-criterion}).
A Ghazouani--Pirio datum with $r$ punctures corresponds to a complex hyperbolic ball of dimension $r-1$, and a Deligne--Mostow--Thurston datum with $N$ marked points corresponds to one of dimension $N-3$. Thus commensurability can occur only when $r=N-2$.
For $d\ge 1$, let $\zeta_d:=e^{2\pi i/d}$. The arithmetic Ghazouani--Pirio examples considered below have CM field $\QQ(i)$ or $\QQ(\zeta_3)$ and are labelled by $\ell\in\{a,b,\dots,p\}$ in \cite[\S 11.3, Table 1]{ghazouani2017moduli}. 
The following theorem gives the complete list of these commensurability relations.

\begin{thm}
\label{thm:main}
Let $\Gamma_\ell$ be one of the sixteen arithmetic Ghazouani--Pirio monodromy groups listed in \cite[\S 11.3, Table 1]{ghazouani2017moduli}. Then the arithmetic Deligne--Mostow--Thurston groups commensurable with $\Gamma_\ell$ are exactly those listed in Table \ref{tab:arithmetic-gp-dm-classification}. Each row represents one commensurability class, with the Ghazouani--Pirio labels in the third column and the Deligne--Mostow--Thurston tuples in the fourth.

\begin{table}[htbp]
\scriptsize
\centering
\caption{Arithmetic Ghazouani--Pirio groups and commensurable Deligne--Mostow--Thurston tuples.}
\begin{tabular}{ccll}
\hline
Dim. & Field & GP labels & Deligne--Mostow--Thurston tuples \\
\hline
2 & $\QQ(i)$ & $b,d$ & $\frac{1}{4}(1,1,1,2,3)$;\quad $\frac{1}{4}(1,1,2,2,2)$ \\
\hline
2 & $\QQ(\zeta_3)$ & $a,c,e,f,g,h,i$ & $\frac{1}{3}(1,1,1,1,2)$;\quad $\frac{1}{6}(1,1,1,4,5)$;\quad $\frac{1}{6}(1,1,2,3,5)$;\quad $\frac{1}{6}(1,1,2,4,4)$; \\
  &                &                   & $\frac{1}{6}(1,1,3,3,4)$;\quad $\frac{1}{6}(1,2,2,2,5)$;\quad $\frac{1}{6}(1,2,2,3,4)$;\quad $\frac{1}{6}(1,2,3,3,3)$; \\
  &                &                   & $\frac{1}{6}(2,2,2,3,3)$ \\
\hline
3 & $\QQ(i)$ & $j$ & $\frac{1}{4}(1,1,1,1,1,3)$;\quad $\frac{1}{4}(1,1,1,1,2,2)$ \\
\hline
3 & $\QQ(\zeta_3)$ & $l,m$ & $\frac{1}{3}(1,1,1,1,1,1)$;\quad $\frac{1}{6}(1,1,1,1,4,4)$;\quad $\frac{1}{6}(1,1,1,2,2,5)$; \\
  &                &       & $\frac{1}{6}(1,1,2,2,2,4)$;\quad $\frac{1}{6}(1,1,2,2,3,3)$ \\
\hline
3 & $\QQ(\zeta_3)$ & $k$ & $\frac{1}{6}(1,1,1,1,3,5)$;\quad $\frac{1}{6}(1,1,1,2,3,4)$; \\
  &                &     & $\frac{1}{6}(1,1,1,3,3,3)$;\quad $\frac{1}{6}(1,2,2,2,2,3)$ \\
\hline
4 & $\QQ(\zeta_3)$ & $n,o$ & $\frac{1}{6}(1,1,1,1,1,2,5)$;\quad $\frac{1}{6}(1,1,1,1,1,3,4)$;\quad $\frac{1}{6}(1,1,1,1,2,2,4)$; \\
  &                &       & $\frac{1}{6}(1,1,1,1,2,3,3)$;\quad $\frac{1}{6}(1,1,1,2,2,2,3)$;\quad $\frac{1}{6}(1,1,2,2,2,2,2)$ \\
\hline
5 & $\QQ(\zeta_3)$ & $p$ & $\frac{1}{6}(1,1,1,1,1,1,1,5)$;\quad $\frac{1}{6}(1,1,1,1,1,1,2,4)$; \\
  &                &     & $\frac{1}{6}(1,1,1,1,1,1,3,3)$;\quad $\frac{1}{6}(1,1,1,1,2,2,2,2)$ \\
\hline
\end{tabular}
\label{tab:arithmetic-gp-dm-classification}
\end{table}
\end{thm}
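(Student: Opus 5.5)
The plan is to reduce commensurability to an invariant-theoretic comparison of the skew-Hermitian lattices preserved by the two kinds of monodromy groups. All the groups here are arithmetic lattices in $\PU(1,n)$ of the "first type". Such a group is commensurable with the integral points of $\mathrm{U}(V,h)$, where $V$ is a vector space over an imaginary quadratic field $K=\QQ(i)$ or $\QQ(\zeta_3)$ and $h$ is a skew-Hermitian (equivalently, after scaling by $\sqrt{-d}$, Hermitian) form of signature $(1,n)$. By Proposition \ref{prop:parity-arithmetic-criterion}, two such lattices are commensurable up to conjugacy exactly when:
\begin{enumerate}[(i)]
\item the CM fields agree;
\item the dimensions agree;
\item the determinant classes of the forms agree in $\QQ^{\times}/N_{K/\QQ}(K^{\times})$, possibly up to the similitude ambiguity.
\end{enumerate}
Hermitian forms over $K$ are classified by dimension, signature, and determinant (Landherr), and similar forms give commensurable unitary groups. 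So the proof amounts to computing, for every arithmetic Deligne--Mostow--Thurston datum and every one of the sixteen Ghazouani--Pirio data, the triple (dimension, field, determinant class), and then sorting the data into classes.

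For the Deligne--Mostow--Thurston side, the twisted homology $H_1(\PP^1\setminus\{x_1,\dots,x_N\},L_\mu)$ has an explicit basis of chambers between consecutive points. The intersection form is tridiagonal, with entries given by the Deligne--Mostow formulas in the $\zeta_d^{\mu_j}$. Its determinant can be computed in closed form, essentially as a product of factors $\frac{\sin(\pi\mu_j)}{\sin(\pi\sum\mu)}$ up to norms. I would quote this from Yu--Zheng \cite{yu2024comm}, which already organizes the DMT commensurability classes by exactly this invariant. From their list I would extract the arithmetic tuples with denominators $3,4,6$ and $N=4,\dots,8$, which gives candidate classes per dimension and field.

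For the Ghazouani--Pirio side, the local system lives on a punctured elliptic curve $E\setminus\{p_1,\dots,p_r\}$. Twisted homology has no explicit chamber basis, and this is the step I expect to be the main obstacle. I would use the degeneration method announced in the abstract. Degenerate the torus to a nodal rational curve: pinch a cycle so that $E$ becomes $\PP^1$ with two points glued. The rank-one local system then becomes one on $\PP^1$ minus $r+2$ points, with exponents $\alpha,-\alpha$ at the two preimages of the node, where $\alpha$ is determined by the holonomy along the pinched cycle. The twisted homology of $E^\circ$ then decomposes, via a Mayer--Vietoris / vanishing-cycle argument, as follows: the twisted homology of the genus-zero piece (a Deligne--Mostow-type lattice of rank $r-1$), plus a hyperbolic-type correction from the vanishing cycle and its dual. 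The form splits orthogonally up to a computable unimodular or norm factor. The determinant class of the GP form therefore equals the DMT-type determinant of the datum $(\mu_1,\dots,\mu_r,\alpha,-\alpha)$, adjusted by an explicit factor, modulo norms. One must check that this limiting identification is compatible with the integral structure, not just over $K$. Since only classes modulo norms matter, working rationally suffices.

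Having computed the GP determinant classes for labels $a,\dots,p$, I match them against the DMT classes of the same dimension and field. This produces the rows of Table \ref{tab:arithmetic-gp-dm-classification}. For exhaustiveness, I note that any DMT group commensurable with $\Gamma_\ell$ has the same triple, so it must appear in the DMT class with that invariant. Conversely, every listed DMT tuple has that invariant. Deciding whether a class in $\QQ^\times/N(K^\times)$ is trivial reduces to checking finitely many primes (norm residue symbols at $2,3$ and the primes dividing the determinant), which is routine.
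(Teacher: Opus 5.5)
\textbf{Verdict: genuine gap, but repairable.} Your overall architecture is the paper's: Proposition \ref{prop:parity-arithmetic-criterion}, DMT determinant classes from Yu--Zheng, compute the Ghazouani--Pirio classes, then match. The gap is the step that carries all the content, namely the determinant class of $\Psi_\rho$. You never establish it, and the mechanism you describe for it is wrong.

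\emph{Rank count.} The Ghazouani--Pirio twisted homology has rank $r$. So does the genus-zero datum $(\mu_1,\dots,\mu_r,\alpha,-\alpha)$ on $r+2$ points. But a ``Deligne--Mostow-type lattice of rank $r-1$'' plus a hyperbolic piece spanned by a vanishing cycle and its dual has rank $r+1$.

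\emph{No vanishing cycle.} If the holonomy $\rho(c)$ along the pinched curve $c$ is $\neq 1$, which is what node exponents $\pm\alpha$ with $\alpha\notin\ZZ$ mean, then a circle or annulus carrying this local system has zero twisted homology. So there is no vanishing-cycle class at all. If instead $\rho(c)=1$, there is such a class, but then the local system extends over the node preimages and the genus-zero piece is $\PP^1$ minus $r$ points, of rank $r-2$.

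\emph{Consequences.} Because the ``explicit factor'' is never identified, no formula for $\det\Psi_\ell$ comes out. None of the odd-dimensional comparisons the theorem needs are actually made. In particular, the separation of the two dimension-$3$ rows over $\QQ(\zeta_3)$ rests on $\det\Psi_l/\det\Psi_k=2/3$ not being a norm from $\QQ(\zeta_3)$, and this is not done. Also, $(\mu_1,\dots,\mu_r,\alpha,-\alpha)$ is not a Deligne--Mostow weight system, so its determinant cannot simply be quoted from Yu--Zheng's tables.

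\textbf{How to repair it, giving a different route from the paper's.}
\begin{enumerate}[(1)]
\item Take a nonseparating simple closed curve $c\subset E_{\tau,z}$ with $\rho(c)\neq 1$. You can use $\beta_0$ or $\beta_\infty$. If $\rho_0=\rho_\infty=1$, push $\beta_0$ across a puncture $z_j$; its holonomy becomes $\rho_j^{\pm1}\neq 1$, since no $\alpha_j$ is an integer.
\item Then $E_{\tau,z}-c\cong \PP^1-\{r+2\text{ points}\}$. Since the neck carries no twisted homology, the inclusion induces isomorphisms on compact and locally finite twisted homology, compatible with the intersection pairings.
\item So $\Psi_\rho$ is \emph{isometric}, with no correction term, to the genus-zero form with local monodromies $(\rho_1,\dots,\rho_r,\rho(c),\rho(c)^{-1})$.
\item The genus-zero collision formula, with a merging order avoiding trivial products, then gives $\det\Psi_\rho\equiv\prod_j(1-\rho_j)^{-1}\cdot N_{K/F}(1-\rho(c))^{-1}\equiv\prod_j(1-\rho_j)^{-1}$. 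This is exactly Proposition \ref{prop:elliptic-determinant-reduction}.
\end{enumerate}

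\textbf{Comparison with the paper.} The paper stays in genus one. It collides punctures, splitting off $\langle\delta_{r-1}\rangle$ by Proposition \ref{prop:elliptic-merge-punctures}, until two punctures remain. There the explicit Ghazouani--Pirio matrix has $\det I\!I_\rho=1$. Your corrected route replaces that genus-one base computation by a purely topological identification with a punctured sphere. Either way, you still must carry out the finite norm checks of Proposition \ref{prop:gafa-dm-comparison} and the $2/3$ argument that distinguishes the $k$ row from the $l,m$ row.
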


\begin{rmk}
In particular, each of the sixteen arithmetic Ghazouani--Pirio groups lies in the same commensurability class as an arithmetic Deligne--Mostow--Thurston group.
\end{rmk}

\begin{rmk}
The Ghazouani--Pirio examples realize every arithmetic Deligne--Mostow--Thurston commensurability class having the same ball dimension and CM field as one of them, except the dimension-$5$ class over $\QQ(\zeta_3)$ represented by $\frac{1}{6}(1,1,1,1,1,2,2,3)$.
\end{rmk}

The Deligne--Mostow--Thurston and Ghazouani--Pirio constructions define two skew-Hermitian forms over certain CM fields on their twisted homology groups, respectively. These skew-Hermitian forms are invariant by the corresponding monodromy actions.
The corresponding CM fields determine the arithmetic commensurability classes in the even ball dimensions. In the odd ball dimensions, we must also compare the determinant classes of the skew-Hermitian forms. 
The genus one degeneration formula (Proposition \ref{prop:elliptic-merge-punctures}) gives an orthogonal decomposition of the Ghazouani--Pirio skew-Hermitian form when two punctures collide. 
Then we apply the determinant formula (Proposition \ref{prop:elliptic-determinant-reduction}) to compute the corresponding determiannt classes. We compare these determinant classes with the commensurability list in \cite[\S9.2]{yu2024comm}, and the commensurability relations follow from Proposition \ref{prop:parity-arithmetic-criterion}.

\subsection*{Acknowledgements}
The authors would like to thank Selim Ghazouani for helpful suggestions.
The first author is supported by the national key research and development program of China (No. 2022YFA1007100) and NSFC 12201337.

\section{Two Skew-Hermitian Spaces}
\label{sec:lattices}

In the Deligne--Mostow and Ghazouani--Pirio constructions, we start with the twisted homology groups of rank one unitary local systems on a punctured projective line and a punctured elliptic curve, respectively. The corresponding twisted intersection pairings define their skew-Hermitian forms.

\subsection{The Deligne--Mostow skew-Hermitian space}
Let $\mu=(\mu_1,\ldots,\mu_{n+3})$ be a Deligne--Mostow weight system, where $\mu_j \in (0,1)\cap\QQ$ and $\sum_j\mu_j=2$. Let $A=\{x_1,\dots,x_{n+3}\}\subset \PP^1$ and let $\mathcal{L}_\mu$ be the associated rank one unitary local system. The locally finite twisted homology group
\[
H_1^{\mathrm{lf}}(\PP^1 - A,\mathcal{L}_\mu^{\vee})
\]
admits a natural skew-Hermitian form coming from twisted intersection. It is equivalent to the skew-Hermitian form on $H^1(\PP^1 - A,\mathcal{L}_\mu)$ via regularization and Poincar\'e duality (see \cite[2.18]{deligne1986monodromy}).
Choose arcs joining adjacent punctures and label the corresponding regularized twisted cycles by $\delta_j$, where $\delta_j$ is supported on an arc from $x_j$ to $x_{j+1}$, $j=1,\ldots,n+1$. Set $\lambda_j=e^{2\pi i\mu_j}$. 
In the basis $(\delta_1,\ldots,\delta_{n+1})$, the skew-Hermitian form $\Psi_{\mathrm{DM}}(\mu)$ is represented by an explicit tridiagonal matrix (see \cite[Proposition 7.4]{yu2024comm}).

\subsubsection*{Collision of weights.}
Suppose that the last two punctures $x_{n+2}$ and $x_{n+3}$ with weights $\mu_{n+2}$ and $\mu_{n+3}$ are merged, producing the reduced weight vector
\[
\mu'=(\mu_1,\ldots,\mu_{n+1},\mu_{n+2}+\mu_{n+3}).
\]
Assume that $\mu_{n+2} + \mu_{n+3} < 1$. Denote by $A'$ the set of punctures corresponding to $\mu'$. 
There is an isomorphism of skew-Hermitian spaces
\[
(H_1^{\mathrm{lf}}(\PP^1 - A,\mathcal{L}_\mu^{\vee}),\Psi_{\mathrm{DM}}(\mu))\cong (H_1^{\mathrm{lf}}(\PP^1 - A',\mathcal{L}_{\mu'}^{\vee}),\Psi_{\mathrm{DM}}(\mu'))\oplus \langle\gamma_{n+2,n+3}\rangle,
\]
with the one-dimensional summand satisfying $\Psi_{\mathrm{DM}}(\gamma_{n+2,n+3},\gamma_{n+2,n+3}) = \frac{1-\lambda_{n+2}\lambda_{n+3}}{(1-\lambda_{n+2})(1-\lambda_{n+3})}$ (see \cite[Proposition 7.5]{yu2024comm}).
Thus we have the reduction formula
\[
\det\Psi_{\mathrm{DM}}(\mu) \equiv \det\Psi_{\mathrm{DM}}(\mu')\cdot\frac{1-\lambda_{n+2}\lambda_{n+3}}{(1-\lambda_{n+2})(1-\lambda_{n+3})}.
\]
We later establish the corresponding formula in genus one (see Proposition \ref{prop:elliptic-merge-punctures}).

\subsection{The Ghazouani--Pirio skew-Hermitian space}
We recall the rank-one local system underlying the genus one construction of \cite[\S3.2]{ghazouani2017flat}.
Fix real numbers $\alpha_1,\dots,\alpha_r\in (-1,\infty)- \ZZ_{\ge 0}$ with
\[
\sum_{j=1}^r \alpha_j=0.
\]
Let $\tau\in \HH$ and let $E_\tau=\CC/(\ZZ\oplus \ZZ\tau)$ be the associated elliptic curve. For $z=(z_1,\dots,z_r)\in \CC^r$ with $z_1=0$ and $[z_1],\dots,[z_r]$ pairwise distinct in $E_\tau$, we denote by
\[
E_{\tau,z}:=E_\tau - \{[z_1],\dots,[z_r]\}.
\]

\subsubsection*{Standard generators and monodromy character.}
Let $\beta_0,\beta_\infty,\beta_1,\dots,\beta_r$ be the standard generators of $H_1(E_{\tau,z},\ZZ)$ used in \cite[\S3.2]{ghazouani2017flat}: the cycles $\beta_j$ for $1\leq j\leq r$ are small positively oriented circles around the punctures, and $\beta_0$ and $\beta_\infty$ are the horizontal and vertical generators of the torus. Following \cite[Lemma 3.2.2 and \S4.2.3]{ghazouani2017flat}, we denote by
\[
\alpha_0:=-\frac{\mathrm{Im}\bigl(\sum_{j=1}^r \alpha_j z_j\bigr)}{\mathrm{Im}(\tau)}, \quad \alpha_\infty:=\alpha_0\tau+\sum_{j=1}^r \alpha_j z_j.
\]
Denote by $\rho_j:=\exp(2\pi i \alpha_j)$ for $j\in\{0,1,\dots,r,\infty\}$. Since $\prod_{j=1}^r\rho_j=\exp(2\pi i\sum_{j=1}^r\alpha_j)=1$, we can define a character
\[
\rho\colon H_1(E_{\tau,z},\ZZ)\longrightarrow \CC^\times, \quad \beta_j\mapsto \rho_j.
\]
By definition, $\alpha_0\in\RR$ and $\operatorname{Im}(\alpha_\infty)=\alpha_0\operatorname{Im}(\tau)+\operatorname{Im}(\sum_{j=1}^r\alpha_jz_j)=0$. Thus $\rho$ is unitary. Let $L_{\tau,z}^{\vee}$ be the associated rank-one local system and $L_{\tau,z}$ be its dual.

\subsubsection*{Regularized twisted cycles.}
We use the "nice position" picture from \cite[\S3.2.4]{ghazouani2017flat}. Inside a fundamental parallelogram, we choose oriented arcs from $z_1=0$ to $1$, to $\tau$, and to $z_2,\dots,z_r$ pairwise disjoint away from $z_1$ (see Figure \ref{figure: nice position and basis}).
We define the following locally finite twisted chains
\[
l_0,l_\infty,l_2,\dots,l_r\in H_1^{\mathrm{lf}}(E_{\tau,z},L_{\tau,z}^{\vee})
\]
by tensoring these arcs with compatible local coefficients in $L_{\tau,z}^{\vee}$.
Their regularizations are denoted by $\gamma_0,\gamma_\infty,\gamma_2,\dots,\gamma_r\in H_1(E_{\tau,z},L_{\tau,z}^{\vee})$.
The next proposition is the genus one analogue of the genus zero regularization (\cite[Proposition 2.6.1]{deligne1986monodromy}).

\begin{prop}[{\cite[Proposition 3.3.1]{ghazouani2017flat}}]
\label{prop:elliptic-regularized-basis}
Assume that the punctures are in nice position and the monodromy character is unitary.
\begin{enumerate}[(1)]
\item The regularization map induces an isomorphism
\[
\reg\colon H_1^{\mathrm{lf}}(E_{\tau,z},L_{\tau,z}^{\vee})\xrightarrow{\ \sim\ } H_1(E_{\tau,z},L_{\tau,z}^{\vee}).
\]
\item The cycles $\gamma_\infty,\gamma_0,\gamma_2,\dots,\gamma_r$ satisfy the unique linear relation
\[
(1-\rho_\infty)\gamma_0+(\rho_0-1)\gamma_\infty+\sum_{k=2}^r \frac{\rho_k-1}{\rho_1\cdots\rho_k}\gamma_k=0.
\]
\item The twisted homology group has dimension $r$, and $\gamma_\infty,\gamma_0,\gamma_3,\dots,\gamma_r$ form a basis of $H_1(E_{\tau,z},L_{\tau,z}^{\vee})$.
\end{enumerate}
\end{prop}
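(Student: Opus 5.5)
The plan is to follow the genus zero argument of \cite[Proposition 2.6.1]{deligne1986monodromy}, working cell by cell on $E_\tau$, with the nice position picture providing an explicit CW structure. Cut $E_\tau$ along the arcs from $z_1=0$ to $1$ and to $\tau$ (the sides of the fundamental parallelogram) and along the arcs from $z_1$ to $z_2,\dots,z_r$. The complement is a single open $2$-cell, namely the parallelogram slit along the star of arcs. With local coefficients, the twisted chain complex of this cell decomposition is explicit:
\begin{itemize}
\item one $2$-cell;
\item $r+1$ edges, namely $l_0,l_\infty,l_2,\dots,l_r$;
\item vertices at the punctures, which are deleted in the locally finite theory.
\end{itemize}

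For (1), I will define $\reg$ on each $l_j$ that ends at a puncture $z_k$. Replace the germ of the arc near $z_k$ by a small circle around $z_k$, weighted by $1/(\rho_k-1)$, plus the truncated arc. This is possible because $\rho_k\neq 1$ for $k\ge 1$, since $\alpha_k\notin\ZZ$. At the common endpoint $z_1$, the arcs in nice position all emanate from $z_1$, so I use a single small circle around $z_1$ for all of them. A more careful version uses circles around $z_1$ split into sectors between consecutive arcs, weighted with the appropriate products of monodromies; this is where the factors $\rho_1\cdots\rho_k$ enter.

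Injectivity and surjectivity then come from the standard argument. The natural map $H_1(E_{\tau,z},L^\vee)\to H_1^{\mathrm{lf}}(E_{\tau,z},L^\vee)$ is an isomorphism because the local homology near each puncture vanishes when the local monodromy is nontrivial: $H_*(\text{punctured disc},L)=0$ for $\rho_k\ne1$. Moreover, $\reg$ is a right inverse of this map.

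For (3), I will compute the dimension via the Euler characteristic. Since $\rho$ is nontrivial, $H_0(E_{\tau,z},L^\vee)=0$, and $H_2=0$ because the surface is open. Therefore
\[
\dim H_1 = -\chi(E_{\tau,z}) = r.
\]
In the locally finite complex there are $r+1$ edges and one $2$-cell, so the boundary of the $2$-cell gives exactly one relation among the $l$'s. This proves uniqueness of the relation in (2) once I show that the boundary is nonzero.

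For (2), the main step is to compute the twisted boundary of the $2$-cell, read around the slit parallelogram starting from a chosen determination of the coefficient. The horizontal sides are identified by translation by $\tau$. Hence $l_0$ appears twice with coefficients differing by $\rho_\infty$, contributing $(1-\rho_\infty)l_0$. Symmetrically, $l_\infty$ contributes $(\rho_0-1)l_\infty$. Each slit $l_k$ is traversed on both sides, and the determinations differ by the monodromy around $z_k$. The accumulated monodromy from going past $z_1,\dots,z_{k}$ in the chosen order gives the coefficient $(\rho_k-1)/(\rho_1\cdots\rho_k)$.

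Applying $\reg$, which is an isomorphism commuting with these identifications, transports the relation to the $\gamma$'s. The main obstacle is the bookkeeping of signs and branch determinations, especially at the shared vertex $z_1$ and the corners of the parallelogram, to get exactly these coefficients. I would fix the orientation conventions of \cite[\S3.2]{ghazouani2017flat} and check the result against the degenerate case $r=1$.

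Finally, the coefficient of $\gamma_2$ is $(\rho_2-1)/(\rho_1\rho_2)\ne0$. So $\gamma_2$ can be eliminated, leaving $r$ vectors $\gamma_\infty,\gamma_0,\gamma_3,\dots,\gamma_r$ that span a space of dimension $r$; they therefore form a basis.
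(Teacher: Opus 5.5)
The paper does not prove this itself; it only cites \cite[Proposition 3.3.1]{ghazouani2017flat}. Your outline is the standard argument for that result and is correct: the one-cell decomposition of the slit fundamental parallelogram, the isomorphism $H_1\to H_1^{\mathrm{lf}}$ coming from $H_*(\text{punctured disc},L)=0$ when $\rho_k\neq 1$ (with $\reg$ as its section), the count $\dim=-\chi(E_{\tau,z})=r$, and the relation read off from the twisted boundary of the $2$-cell, with the exact coefficients in (2) fixed by the branch conventions of \cite[\S3.2]{ghazouani2017flat}.

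Two small corrections:
\begin{enumerate}[(1)]
\item The factors $1/(\rho_1\cdots\rho_k)$ come only from comparing the chosen coefficients on the $l_k$ with the single determination on the $2$-cell, not from regularization. Regularization is done arc by arc and introduces no such factors, so drop the ``sectors'' remark.
\item Your $r=1$ check is vacuous, since $\sum_j\alpha_j=0$ and $\alpha_1\notin\ZZ$ exclude $r=1$. Check against $r=2$ instead: there your relation correctly reduces to $(1-\rho_\infty)\gamma_0-(1-\rho_0)\gamma_\infty=(1-\rho_2)\gamma_2$, as recorded in the paper.
\end{enumerate}
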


\subsubsection*{Skew-Hermitian pairing.} 
Since $\rho$ is unitary, the complex conjugation identifies $L_{\tau,z}^{\vee}$ with $\overline{L}_{\tau,z}$. The twisted intersection product for rank-one local systems on punctured elliptic curves induces a nondegenerate skew-Hermitian form (see \cite[\S 3.1.3--3.1.6]{ghazouani2017flat})
\[
\Psi_\rho\colon
H_1(E_{\tau,z},L_{\tau,z}^{\vee})\times
H_1(E_{\tau,z},L_{\tau,z}^{\vee})
\longrightarrow \CC,
\quad
\Psi_\rho(\gamma,\delta)=\gamma\cdot\overline{\delta}.
\]

\begin{figure}[htp]
  \centering
  \includegraphics[width=10cm]{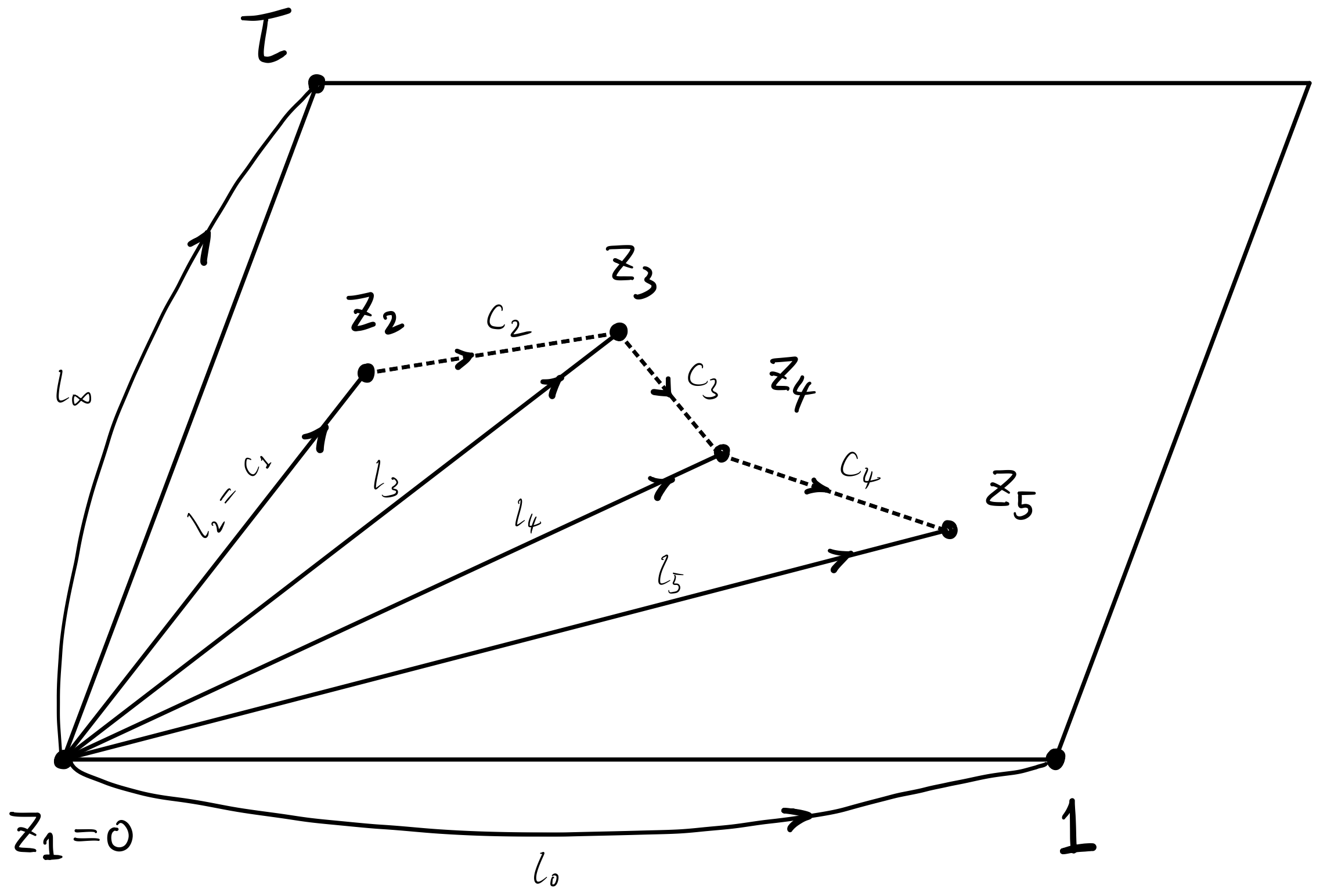}
  \caption{The oriented arcs underlying the locally finite twisted chains $l_j$ for $j=0,\infty,2,\ldots,r$ and $c_j$ for $j=1,\ldots,r-1$, with the points $z_i$ in very nice position.}
  \label{figure: nice position and basis}
\end{figure}

\subsubsection*{The twisted intersection matrix.}
We choose oriented arcs joining $z_j$ to $z_{j+1}$, for $1\leq j\leq r-1$ (see Figure \ref{figure: nice position and basis} and \cite[\S 3.2.4, Figure 5]{ghazouani2017flat}). Similarly, we define the following locally finite twisted chains 
\[
c_1,c_2,\ldots,c_{r-1}
\]
by tensoring these arcs with compatible local sections of $L_{\tau,z}^{\vee}$.
Choose the local coefficients such that $c_1=l_2$. 
The linear transformation between $ \gamma_2,\ldots,\gamma_r $ and $ \delta_1,\ldots,\delta_{r-1} $ is given by
\[
\gamma_i = \sum_{j=1}^{i-1} \delta_j, \quad 2 \le i \le r.
\]
Then $l_0,l_\infty,c_2,\ldots,c_{r-1}$ form a basis of $H_1^{\mathrm{lf}}(E_{\tau,z},L_{\tau,z}^{\vee})$.
The regularizations of $c_1=l_2,c_2,\ldots,c_{r-1}$ are denoted by $\delta_1 = \gamma_2,\delta_2,\ldots,\delta_{r-1} \in H_1(E_{\tau,z},L_{\tau,z}^{\vee})$.

In the case of $r=2$, the relation between the regularized cycles is (see \cite[\S 3.5]{ghazouani2017flat})
\[
(1-\rho_\infty)\gamma_0-(1-\rho_0)\gamma_\infty=(1-\rho_2)\gamma_2,
\]
so the twisted homology is generated by $\gamma_\infty$ and $\gamma_0$. 

\begin{prop}[{\cite[\S 3.5]{ghazouani2017flat}}]
\label{prop:r2-elliptic-matrix}
In the case $r=2$, with respect to the basis $(\gamma_\infty,\gamma_0)$ and the dual locally finite basis $(\overline{l}_\infty,\overline{l}_0)$, the twisted intersection matrix is
\[
I\!I_\rho :=
\left(
\begin{array}{cc}
\dfrac{(\rho_\infty-1) (\rho_1\rho_{\infty}-1)}{(\rho_1-1)\rho_\infty}
&
\dfrac{-1+\rho_0^{-1}+\rho_{\infty}-\rho_1\rho_{\infty}\rho_0^{-1}}{\rho_1-1} \\
\dfrac{-\rho_1+\rho_0\rho_1+\rho_1\rho_{\infty}^{-1}-\rho_0\rho_{\infty}^{-1}}{\rho_1-1}
&
\dfrac{\rho_0-1}{\rho_1-1}\bigl(1-\dfrac{\rho_1}{\rho_0}\bigr)
\end{array}
\right).
\]
This is the $2\times 2$ matrix of the skew-Hermitian form $\Psi_\rho$. Moreover, one has $\det I\!I_\rho=1$.
\end{prop}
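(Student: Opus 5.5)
The matrix itself is quoted from \cite[\S 3.5]{ghazouani2017flat}. What remains to justify is that it represents $\Psi_\rho$ in the basis $(\gamma_\infty,\gamma_0)$, and that its determinant equals $1$.

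\emph{Step 1: the entries.} For $r=2$ we have $\rho_2=\rho_1^{-1}$, so the character depends only on $\rho_1,\rho_0,\rho_\infty$. Each entry is a twisted intersection number $\gamma_a\cdot\overline{l}_b$, where $\gamma_a=\reg(l_a)$. I would compute them exactly as in the genus zero case \cite[2.6]{deligne1986monodromy}. Write $\reg(l_a)$ as the arc $l_a$ truncated near its endpoints, plus small circles around the punctures weighted by $1/(\rho_1-1)$, plus the appropriate monodromy factors. Then count the transverse intersections of this cycle with the locally finite arc $\overline{l}_b$, recording the local coefficient at each crossing.

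Since the arcs $l_0$ and $l_\infty$ both start at $z_1=0$, all crossings are concentrated near $z_1$. This accounts for the uniform denominator $\rho_1-1$, and for the monodromy factors $\rho_0^{\pm1}$ and $\rho_\infty^{\pm1}$ that arise when the arc wraps around the torus. As a consistency check, one verifies $\overline{I\!I_\rho}^{T}=-I\!I_\rho$ using $\overline{\rho}=\rho^{-1}$.

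\emph{Step 2: the determinant.} This is a direct algebraic verification. Write $D=(\rho_1-1)^2\det I\!I_\rho$ and expand
\[
D=(\rho_\infty-1)(\rho_1\rho_\infty-1)\rho_\infty^{-1}(\rho_0-1)(1-\rho_1\rho_0^{-1})-(-1+\rho_0^{-1}+\rho_\infty-\rho_1\rho_\infty\rho_0^{-1})(-\rho_1+\rho_0\rho_1+\rho_1\rho_\infty^{-1}-\rho_0\rho_\infty^{-1}).
\]
To organize the computation, I would note that the off-diagonal numerators factor as
\[
-(1-\rho_0^{-1})(1-\rho_\infty)-\rho_\infty\rho_0^{-1}(\rho_1-1)
\]
and
\[
(\rho_0-1)\rho_1(1-\rho_\infty^{-1})+\rho_\infty^{-1}(\rho_1-\rho_0)\cdot(\text{suitable factor}).
\]
After clearing denominators, the cross terms cancel against the diagonal product, leaving $(\rho_1-1)^2$. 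Hence $\det I\!I_\rho=1$.

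The expected obstacle is Step 1. Getting the orientation and local-coefficient conventions right at the crossings near $z_1$ is delicate, and the argument depends on matching the conventions of \cite{ghazouani2017flat} exactly. Since this paper cites the formula, I would rely on the cited computation, and independently confirm the determinant identity symbolically.
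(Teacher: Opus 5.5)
Your proposal follows the paper's route. The paper gives no independent argument: it quotes both the matrix and $\det I\!I_\rho=1$ from \cite[\S 3.5]{ghazouani2017flat}, just as your Step 1 defers to that computation. The identity you assert in Step 2 is also correct; it is an identity of rational functions in $\rho_0,\rho_1,\rho_\infty$, so it does not even need unitarity.

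The one slip is your second factorization. As written, the ``suitable factor'' would have to be $\rho_0(\rho_1-1)/(\rho_1-\rho_0)$. The correct splitting is $(\rho_0-1)\rho_1(1-\rho_\infty^{-1})+\rho_0\rho_\infty^{-1}(\rho_1-1)$. The quickest way to finish is to multiply $D$ by $\rho_0\rho_\infty$:
\[
\rho_0\rho_\infty D=(\rho_\infty-1)(\rho_1\rho_\infty-1)(\rho_0-1)(\rho_0-\rho_1)-(1-\rho_0+\rho_0\rho_\infty-\rho_1\rho_\infty)(\rho_1-\rho_0-\rho_1\rho_\infty+\rho_0\rho_1\rho_\infty)=\rho_0\rho_\infty(\rho_1-1)^2 .
\]
After expansion everything cancels except $\rho_0\rho_\infty-2\rho_0\rho_1\rho_\infty+\rho_0\rho_1^2\rho_\infty$. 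This gives $D=(\rho_1-1)^2$, and hence $\det I\!I_\rho=1$.
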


\subsection{The arithmetic commensurability criterion}
\label{subsec:comm}

Let $K$ be a CM field and $F=K\cap\RR$ be its totally real subfield. If $\Psi$ is a $K$-skew-Hermitian form, then for any $\zeta\in K-F$ the form $(\zeta-\overline{\zeta})\Psi$ is $K$-Hermitian. Landherr \cite{landherr1935aquivalenz} classifies $K$-Hermitian forms over the CM extension $K/F$ by their signatures at the real embeddings of $F$ and their determinant class in $F^\times/N_{K/F}(K^\times)$ (see also \cite[p.268, Example 5]{Jacobson1940anoteonhermitianforms}). Yu and Zheng \cite[\S6]{yu2024comm} discuss the relation between $F$-conformal Hermitian spaces, the induced $F$-forms of projective unitary groups, and commensurability of arithmetic subgroups.
The results in \cite[\S6]{yu2024comm} on conformal $K$-skew-Hermitian spaces and the associated $F$-forms give the following criterion.

\begin{prop}
\label{prop:parity-arithmetic-criterion}
Let $K_1, K_2$ be two imaginary quadratic extensions of $\QQ$. 
Let $(V_i, \Psi_i)$ be a $K_i$-skew-Hermitian space of dimension $n+1 \ge 3$.
Let $\Gamma_i$ be an arithmetic subgroup of the $\QQ$-group
$\PU(V_i,\Psi_i)$, viewed as a lattice in $\PU(1,n)$.
\begin{enumerate}[(1)]
\item If $n$ is even, then $\Gamma_1$ and $\Gamma_2$ are commensurable if and only if $K_1=K_2$.
\item If $n\ge 3$ is odd, then $\Gamma_1$ and $\Gamma_2$ are commensurable if and only if $K_1=K_2=K$ and
\[
\det \Psi_1=\det \Psi_2 \in \QQ^\times/N_{K/\QQ}(K^\times).
\]
\end{enumerate}
\end{prop}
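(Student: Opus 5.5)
The plan is to deduce Proposition \ref{prop:parity-arithmetic-criterion} from the classification of arithmetic subgroups of projective unitary groups by their $\QQ$-forms. Two steps come first.

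First, I use that two arithmetic lattices in the connected simple group $\PU(1,n)$ are commensurable up to conjugacy if and only if the underlying $\QQ$-groups $\PU(V_1,\Psi_1)$ and $\PU(V_2,\Psi_2)$ are $\QQ$-isomorphic by an isomorphism compatible with the real points. This is standard for lattices in simple Lie groups of real rank one with trivial center, via Margulis' characterization of the commensurator and the uniqueness of the $\QQ$-form up to isomorphism.

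Second, I recall from \cite[\S6]{yu2024comm} that such $\QQ$-forms correspond bijectively to conformal classes of $K$-Hermitian spaces. Concretely, $\PU(V,\Psi)\cong\PU(V',\Psi')$ over $\QQ$, for $n+1\ge 3$, exactly when $K\cong K'$ and $(V',\Psi')\cong(V,c\Psi)$ for some $c\in\QQ^\times$, up to Galois twisting by complex conjugation. The restriction $n+1\ge3$ is what makes the field $K$ recoverable. The $\QQ$-group is an outer form of type ${}^2A_n$, and its splitting field, the quasi-split inner class, is $K$. So commensurability forces $K_1=K_2$.

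Next, with $K$ fixed, I apply Landherr's theorem to the Hermitian forms $(\zeta-\overline\zeta)\Psi_i$. Since the lattices live in $\PU(1,n)$, the real signatures agree up to sign, and both forms are $(1,n)$ or $(n,1)$. Hence the forms are conformally equivalent if and only if the determinant classes match after scaling. Scaling by $c\in\QQ^\times$ multiplies the determinant by $c^{n+1}$, and conjugation does not change the class.
\begin{enumerate}[(1)]
\item If $n$ is even, then $n+1$ is odd, and choosing $c=\det\Psi_2/\det\Psi_1$ changes the class by $c^{n+1}\equiv c$ modulo squares, which are norms. So the classes can always be matched. The signature is then adjusted via the sign of $c$, which changes the class by $\pm$ and hence cannot conflict. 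Indeed, a negative $c$ swaps the signature $(1,n)\leftrightarrow(n,1)$, and this fixes sign consistency at the single real place. Some care is needed here: the signature and the determinant sign are linked, because the sign of the determinant is determined by the signature. Since $K$ is imaginary quadratic and $\QQ$ has one real place, every negative rational is a non-norm, so this is consistent.
\item If $n$ is odd, then $c^{n+1}$ is a square, hence a norm. Thus $\det$ is a genuine invariant in $\QQ^\times/N_{K/\QQ}(K^\times)$. Equality of determinants together with matching signatures gives an isometry by Landherr, and hence $\QQ$-isomorphic groups and commensurable lattices.
\end{enumerate}

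The main obstacle is the converse in the first step: showing that a $\QQ$-isomorphism of the groups lifts to a conformal similitude of the Hermitian spaces. This requires the identification of automorphisms and forms of type ${}^2A_n$ with $\mathrm{PGU}$-data, namely an automorphism group that is a similitude group possibly composed with conjugation, which is exactly the content cited from \cite[\S6]{yu2024comm}. I would invoke that directly rather than reprove it.
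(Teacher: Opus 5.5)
Your proposal is correct and follows the paper's route. Commensurability is reduced to $\QQ$-isomorphism of the groups (\cite[Proposition 6.8]{yu2024comm}), then to $K_1=K_2$ plus $\QQ$-conformality (\cite[Proposition 6.4]{yu2024comm}), and then to a determinant comparison. The only difference is that you rederive the conformality criterion (\cite[Proposition 6.2]{yu2024comm}) from Landherr using a common $\zeta$ and a possibly negative scalar $c$, whereas the paper normalizes each $h_i=\eta_i\Psi_i$ to signature $(1,n)$ and notes that $(\eta_1/\eta_2)^{n+1}$ is a norm.

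One clarification for $n$ even: the sign of $c=\det\Psi_2/\det\Psi_1$ is not a free parameter, since flipping it changes the class by the non-norm $-1$. No adjustment is needed, however. For $n$ even, the sign of the determinant already distinguishes $(1,n)$ from $(n,1)$, so matching determinant classes automatically forces matching signatures.
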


\begin{proof}
Let $G_i=\PU(V_i,\Psi_i)$. Fix $\eta_i\in K_i^\times$ with $\overline{\eta_i}=-\eta_i$ such that $h_i:=\eta_i\Psi_i$ has signature $(1,n)$. Then $h_i$ is Hermitian and $\PU(V_i,h_i)=G_i$.

Suppose that $\Gamma_1$ and $\Gamma_2$ are commensurable. After conjugating $\Gamma_2$, we may assume that $\Gamma_1\cap\Gamma_2$ has finite index in both groups. By \cite[Proposition 6.8]{yu2024comm}, it follows that $G_1$ is isomorphic to $G_2$ as $\QQ$-algebraic groups. Hence \cite[Proposition 6.4(1)]{yu2024comm} gives $K_1=K_2=K$ and $\QQ$-conformality of $h_1$ and $h_2$. Conversely, $\QQ$-conformality identifies $G_1$ and $G_2$ over $\QQ$. Under this identification, $\Gamma_1$ and $\Gamma_2$ are arithmetic subgroups of the same $\QQ$-group, hence commensurable.

By \cite[Proposition 6.2]{yu2024comm}, $h_1$ and $h_2$ are $\QQ$-conformal when $n$ is even, whereas for odd $n$ they are $\QQ$-conformal if and only if their determinant classes are equal.
In this case, $n+1$ is even and $\eta_1/\eta_2\in\QQ^\times$, with $(\eta_1/\eta_2)^{n+1} = N_{K/\QQ}((\eta_1/\eta_2)^{(n+1)/2})$. Thus $\det h_1=\det h_2$ if and only if $\det\Psi_1=\det\Psi_2$.
\end{proof}

The CM fields $\QQ(i)$ and $\QQ(\zeta_3)$ occur in the comparison of determinant classes of the Ghazouani--Pirio and Deligne--Mostow--Thurston forms in Section \ref{sec:comparison}.

\section{Genus One Degeneration and Determinant Classes}
\label{subsec:degeneration}

The collision of two punctures produces an orthogonal decomposition of $\Psi_\rho$ into a lower-dimensional skew-Hermitian form and a one-dimensional summand generated by the vanishing cycle. The determinant calculation is reduced to the two-puncture case by itarations.
This method was established in \cite[\S 7]{yu2024comm} for the genus zero case.

\subsection{Collision of punctures and reduction to the two-puncture case}

From now on, assume that the monodromy character $\rho$ has finite order at least $3$. Denote by
\[
K:=\mathbb{Q}(\operatorname{Im}\rho),
\quad
F:=K\cap\mathbb{R}.
\]
Then $K/F$ is a CM extension.
The skew-Hermitian form $\Psi_\rho$ is defined over $K$, and the determinant class of $\Psi_\rho$, denoted by $\det \Psi_\rho$, is an element in $K^\times/N_{K/F}(K^\times)$.
 
The underlying oriented arcs of $l_0$ and $l_\infty$ represent a basis of $H_1(E_\tau,\mathbb Z)$ (see Figure \ref{figure: nice position and basis}).
Let $z_{r-1}$ and $z_r$ collide inside a simply connected disk which is disjoint from these two curves and contains no other punctures.
Let $\rho'$ be the new monodromy datum obtained by replacing $(\rho_{r-1},\rho_r)$ with the single merged value $\rho_{r-1}\rho_r$. Assume that the regularized vanishing cycle $\delta_{r-1}$ is supported in the shrinking disk.

\begin{prop}
\label{prop:elliptic-merge-punctures}
Assume that $r\ge 3$ and $\rho_{r-1}\rho_r\neq 1$. Then there is an orthogonal decomposition of $K$-skew-Hermitian forms
\[
\Psi_\rho\cong \Psi_{\rho'}\oplus \langle \delta_{r-1}\rangle.
\]
Here $\Psi_\rho(\delta_{r-1},\delta_{r-1})
=-1+\frac{1}{1-\rho_{r-1}}+\frac{1}{1-\rho_r}
=
\frac{1-\rho_{r-1}\rho_r}{(1-\rho_{r-1})(1-\rho_r)}$.
\end{prop}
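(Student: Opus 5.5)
We follow the genus zero argument of \cite[Proposition 7.5]{yu2024comm}, localized in the shrinking disk $D$ that contains $z_{r-1},z_r$ and no other punctures, and is disjoint from the arcs underlying $l_0,l_\infty$.

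\textbf{Step 1: the self-intersection of $\delta_{r-1}$.} The cycle $\delta_{r-1}$ is the regularization of $c_{r-1}$. It is supported in $D$ and consists of the segment from $z_{r-1}$ to $z_r$ together with the small circles around the two endpoints, weighted by $1/(\rho_{r-1}-1)$ and $1/(\rho_r-1)$ up to the orientation convention. The computation of $\delta_{r-1}\cdot\overline{\delta_{r-1}}$ then involves only the restriction of the local system to $D$ minus two points. This is exactly the local computation behind the genus zero formula for regularized arcs (\cite[Proposition 7.4]{yu2024comm}, diagonal entries). Pushing the arc slightly off itself, one gets the three contributions
\[
-1,\quad \frac{1}{1-\rho_{r-1}},\quad \frac{1}{1-\rho_r}.
\]
They sum to $\frac{1-\rho_{r-1}\rho_r}{(1-\rho_{r-1})(1-\rho_r)}$. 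By hypothesis $\rho_{r-1}\rho_r\neq1$, so this value is nonzero and $\langle\delta_{r-1}\rangle$ is a nondegenerate line.

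\textbf{Step 2: the orthogonal complement.} Let $W\subset H_1(E_{\tau,z},L^\vee_{\tau,z})$ be spanned by the regularizations of $l_0,l_\infty,c_2,\dots,c_{r-3}$ together with one cycle $\epsilon$. Here $\epsilon$ is the regularization of an arc from $z_{r-2}$ to a point of $\partial D$, continued by a loop around $\partial D$ encircling both $z_{r-1}$ and $z_r$. Equivalently, $\epsilon$ is the cycle that survives as $c_{r-2}$ after the merge.

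\begin{itemize}
\item $W$ is orthogonal to $\delta_{r-1}$. Choose all representatives of $W$ disjoint from $D$, except the part of $\epsilon$ on $\partial D$. The remaining intersection of $\epsilon$ with $\delta_{r-1}$ is then computed inside $D$: it is the intersection of a cycle around the whole pair with the vanishing cycle. This vanishes because the two crossings carry the same coefficient with opposite signs.
\item $W\cong\Psi_{\rho'}$ isometrically. Outside $D$, the local system $L_{\tau,z}$ agrees with $L_{\tau,z'}$ for the merged datum, since the monodromy around $\partial D$ is $\rho_{r-1}\rho_r$. The quantities $\alpha_0$ and $\alpha_\infty$ are unchanged in the limit, because $\alpha_{r-1}z_{r-1}+\alpha_r z_r\to(\alpha_{r-1}+\alpha_r)z_{r-1}$. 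Hence $\rho_0,\rho_\infty$ match, and the corresponding merged basis of Proposition \ref{prop:elliptic-regularized-basis} has the same intersection numbers, all computed away from $D$. Since $\rho_{r-1}\rho_r\neq1$, the merged point is a genuine puncture, and the regularization on $\partial D$ is the merged regularization.
\item Dimensions match. $\dim W=r-1$, which is the rank for $\rho'$ by Proposition \ref{prop:elliptic-regularized-basis}(3). So $W\oplus\langle\delta_{r-1}\rangle$ is the whole space.
\end{itemize}

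\textbf{Main obstacle.} The hardest part is making the identification of $W$ with the merged space precise. One must show that $\epsilon$ together with the other cycles is really a basis, that is, that it is related to the merged $c_{r-2}$ and hence to the basis in Proposition \ref{prop:elliptic-regularized-basis}(3) by an invertible change. One must also track the branch choices of local coefficients so that the isometry holds on the nose and not only up to scalars.

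I would handle this the way the genus zero case is handled. Write $\epsilon=\delta_{r-2}+t\,\delta_{r-1}$, where $t$ is the Gram–Schmidt coefficient $-\Psi(\delta_{r-2},\delta_{r-1})/\Psi(\delta_{r-1},\delta_{r-1})$. Then check directly from the local intersection numbers that this is the regularized merged arc. Deformation invariance of the intersection pairing, as $D$ shrinks with the arcs fixed outside $D$, then transports the Gram matrix to that of $\Psi_{\rho'}$.
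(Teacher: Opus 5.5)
Your proposal is correct and matches the paper's argument: the paper's proof only observes that the computation is local to the shrinking disk, and hence identical to the genus zero computation of \cite[Propositions 7.4--7.5]{yu2024comm}, and you carry out exactly that (the local self-intersection of the vanishing cycle, and the identification of the surviving cycle $\epsilon=\delta_{r-2}+t\,\delta_{r-1}$ with the merged regularized arc). One minor point: $\rho_0,\rho_\infty$ agree with those of $\rho'$ exactly because $\beta_0,\beta_\infty$ avoid $D$, so $\rho'$ is the restriction of $\rho$ to the complement of $D$; your limiting argument on $\alpha_0,\alpha_\infty$ only gives approximate agreement, and exact agreement is what an isometry over $K$ requires.
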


\begin{proof}
The calculation is local and hence the same as in \cite[Propositions 7.4--7.5]{yu2024comm}.
\end{proof}

\subsection{The determinant formula}

\begin{prop}
\label{prop:elliptic-determinant-reduction}
Let $\rho_1,\dots,\rho_r$ be the puncture monodromies, and assume that they can be successively merged until only two remain, with the monodromy produced at each merging step different from $1$. Then
\[
\det \Psi_\rho
\equiv
\frac{1}{\prod_{j=1}^r (1-\rho_j)}
\quad
\text{in }K^\times/N_{K/F}(K^\times).
\]
In particular, for fixed $K/F$, the determinant class in $K^\times/N_{K/F}(K^\times)$ is determined by the puncture monodromies $\rho_1,\dots,\rho_r$.
\end{prop}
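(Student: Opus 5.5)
The plan is to induct on the number $r$ of punctures. Proposition \ref{prop:elliptic-merge-punctures} peels off one orthogonal summand at each collision, and the base case $r=2$ is handled by Proposition \ref{prop:r2-elliptic-matrix}.

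First I would make precise what $\equiv$ means for Gram matrices. An isomorphism of $K$-skew-Hermitian spaces is given by a change of basis $P\in \mathrm{GL}(K)$, and it multiplies the Gram determinant by $\det P\cdot\overline{\det P}=N_{K/F}(\det P)$. So the class in $K^\times/N_{K/F}(K^\times)$ is well defined and multiplicative under orthogonal sums. Since $\Psi_\rho$ is skew-Hermitian, its determinant lies in $F^\times$ when $r$ is even and in $\sqrt{-1}\,F^\times$ up to sign conventions when $r$ is odd. I would treat the class simply as an element of $K^\times$ modulo norms, as in the statement. I would also check that, after merging, the field $\QQ(\operatorname{Im}\rho')$ is contained in $K$. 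All forms can therefore be regarded over the same $K$, and norms from a subfield stay norms from $K$.

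\textbf{Base case $r=2$.} Here $\alpha_1+\alpha_2=0$, so $\rho_2=\rho_1^{-1}=\overline{\rho_1}$ by unitarity. Moreover $\rho_1\neq 1$, because $\alpha_1\in(-1,\infty)-\ZZ_{\ge0}$ forces $\alpha_1\notin\ZZ$. Proposition \ref{prop:r2-elliptic-matrix} gives $\det I\!I_\rho=1$. On the other hand,
\[
\frac{1}{(1-\rho_1)(1-\rho_2)}=\frac{1}{(1-\rho_1)\overline{(1-\rho_1)}}=N_{K/F}\bigl((1-\rho_1)^{-1}\bigr),
\]
so both sides represent the trivial class.

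\textbf{Inductive step.} After relabelling, which amounts to choosing the nice-position picture so that the pair to be merged consists of the last two punctures $z_{r-1},z_r$, apply Proposition \ref{prop:elliptic-merge-punctures}. The hypothesis $\rho_{r-1}\rho_r\ne 1$ holds by assumption. This gives
\[
\det\Psi_\rho\equiv\det\Psi_{\rho'}\cdot\frac{1-\rho_{r-1}\rho_r}{(1-\rho_{r-1})(1-\rho_r)}.
\]
The datum $\rho'$ has $r-1$ punctures with monodromies $\rho_1,\dots,\rho_{r-2},\rho_{r-1}\rho_r$, and it still satisfies the merging hypothesis. By induction,
\[
\det\Psi_{\rho'}\equiv\Bigl(\prod_{j\le r-2}(1-\rho_j)\cdot(1-\rho_{r-1}\rho_r)\Bigr)^{-1}.
\]
The factor $1-\rho_{r-1}\rho_r$ cancels, which yields $\det\Psi_\rho\equiv 1/\prod_{j=1}^r(1-\rho_j)$. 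The final assertion of the proposition is then immediate, since the right-hand side involves only $\rho_1,\dots,\rho_r$.

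\textbf{Main obstacle.} The delicate point is justifying that at each step the degenerate configuration is legitimate. Concretely, three things must hold:
\begin{enumerate}[(a)]
\item the two punctures can be brought together inside a disk avoiding the arcs $l_0,l_\infty$ while staying in nice position;
\item the merged weight $\alpha_{r-1}+\alpha_r$ again gives a valid datum with unitary character;
\item the twisted homology of $E_{\tau,z}$ restricted away from the disk identifies with that of the merged configuration.
\end{enumerate}
Point (c) is what makes the first summand in Proposition \ref{prop:elliptic-merge-punctures} equal to $\Psi_{\rho'}$. For (b) I would note that only $\rho'$ enters the argument, so $\rho_{r-1}\rho_r\ne1$ suffices; the real parameter can be adjusted by an integer, since $\Psi_\rho$ depends only on the character. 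I would argue (a) and (c) by a local deformation within the fibre, since the intersection form is locally constant. I would then appeal to the locality already used in the proof of Proposition \ref{prop:elliptic-merge-punctures}.
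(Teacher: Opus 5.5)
Your proof is correct and follows essentially the same route as the paper. You iterate the collision formula of Proposition \ref{prop:elliptic-merge-punctures}, use $\det I\!I_\rho=1$ for the two-puncture base, and absorb the leftover factor $(1-t)(1-\overline{t})=N_{K/F}(1-t)$ as a norm; you organize this as an induction with cancellation, while the paper telescopes over the multisets $\Sigma_m$. Your remarks on the well-definedness of the class, on working over a single field $K$, and on the legitimacy of each degeneration only make explicit what the paper leaves implicit.
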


\begin{proof}
Choose a sequence of merging steps as in the statement, and let $\rho^{(m)}$ denote the datum with $m$ puncture monodromies in this sequence. Thus $\rho^{(r)}=\rho$ and $\rho^{(2)}=(\rho_0,\rho_\infty,t,t^{-1})$. For $3\leq m\leq r$, let $(\nu_{m,1},\nu_{m,2})$ be the pair merged in passing from $\rho^{(m)}$ to $\rho^{(m-1)}$.
By Proposition \ref{prop:elliptic-merge-punctures}, we obtain the formula
\[
\det \Psi_{\rho^{(m)}} \equiv \det \Psi_{\rho^{(m-1)}}\cdot \frac{1-\nu_{m,1}\nu_{m,2}}{(1-\nu_{m,1})(1-\nu_{m,2})}.
\]
Denote by $\Sigma_m$ the multiset of puncture monodromies for $\rho^{(m)}$ (monodromies are counted with multiplicity). Then
\[
\frac{1-\nu_{m,1}\nu_{m,2}}{(1-\nu_{m,1})(1-\nu_{m,2})}
=
\frac{\prod_{\xi\in \Sigma_{m-1}}(1-\xi)}{\prod_{\eta\in \Sigma_m}(1-\eta)}.
\]
Since the reduced two-puncture form has determinant $1$ by Proposition \ref{prop:r2-elliptic-matrix}, iterating gives
\[
\det \Psi_\rho \equiv
\frac{\prod_{\xi\in \Sigma_2}(1-\xi)}{\prod_{\eta\in \Sigma_r}(1-\eta)}
=
\frac{(1-t)(1-t^{-1})}{\prod_{j=1}^r(1-\rho_j)}.
\]
Finally, notice that $(1-t)(1-t^{-1})=(1-t)(1-\overline{t})=N_{K/F}(1-t)$, so this factor becomes trivial in $K^\times/N_{K/F}(K^\times)$ and the simplified formula follows.
\end{proof}

\section{Comparison with Arithmetic Deligne--Mostow--Thurston Lattices}
\label{sec:comparison}

We apply the determinant formula to the sixteen arithmetic Ghazouani--Pirio examples and compare them with the arithmetic commensurability table in \cite[\S 9.2, Table 2]{yu2024comm}.

\subsection{The arithmetic Ghazouani--Pirio list}
The Ghazouani--Pirio list \cite[\S 11.3, Table 1]{ghazouani2017moduli} has sixteen labels $\ell\in\{a,b,\dots,p\}$ corresponding to the following data
\[
m(\ell)\in\{3,4,6\},
\quad
\frac{m(\ell)\theta(\ell)}{2\pi}=\big(a_1(\ell),\dots,a_r(\ell)\big)\in \ZZ^r.
\]
For these arithmetic leaves, the puncture monodromies are $\rho_j(\ell)=\exp(i\theta_j(\ell))=\zeta_{m(\ell)}^{a_j(\ell)}$. By Proposition \ref{prop:elliptic-determinant-reduction}, we have
\[
\det \Psi_\ell
\equiv
\frac{1}{\prod_{j=1}^{r(\ell)} \big(1-\zeta_{m(\ell)}^{a_j(\ell)}\big)}
\in \QQ(\zeta_{m(\ell)})^\times/N_{\QQ(\zeta_{m(\ell)})/(\QQ(\zeta_{m(\ell)})\cap \RR)}\big(\QQ(\zeta_{m(\ell)})^\times\big).
\]

\begin{prop}
\label{prop:gafa-arithmetic-determinants}
For the labels $\ell$ in the Ghazouani--Pirio list, the determinant classes of $\Psi_\ell$ have the following representatives:
\begin{center}
\begin{tabular}{ccl}
\hline
$\ell$ & $m(\ell)$ & $\det\Psi_\ell$ \\
\hline
$a,i$   & $3$ & $\frac{1}{(1-\zeta_3^2)^3}$ \\
$b,d$   & $4$ & $\frac{1}{(1-\zeta_4^2)(1-\zeta_4^3)^2}$ \\
$j$     & $4$ & $\frac{1}{(1-\zeta_4^3)^4}$ \\
$c,g$   & $6$ & $\frac{1}{(1-\zeta_6^2)(1-\zeta_6^5)^2}$ \\
$e,f,h$ & $6$ & $\frac{1}{(1-\zeta_6^3)(1-\zeta_6^4)(1-\zeta_6^5)}$ \\
$k$     & $6$ & $\frac{1}{(1-\zeta_6^3)(1-\zeta_6^5)^3}$ \\
$l,m$   & $6$ & $\frac{1}{(1-\zeta_6^4)^2(1-\zeta_6^5)^2}$ \\
$n,o$   & $6$ & $\frac{1}{(1-\zeta_6^4)(1-\zeta_6^5)^4}$ \\
$p$     & $6$ & $\frac{1}{(1-\zeta_6^5)^6}$ \\
\hline
\end{tabular}
\end{center}
\end{prop}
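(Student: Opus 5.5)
The plan is to take the sixteen data $(m(\ell);a_1(\ell),\dots,a_r(\ell))$ from \cite[\S 11.3, Table 1]{ghazouani2017moduli}, check for each label that Proposition \ref{prop:elliptic-determinant-reduction} applies, evaluate the product $\prod_j(1-\zeta_{m}^{a_j})$, and then simplify it modulo norms from $K=\QQ(\zeta_m)$ to $F=\QQ$. Since $m\in\{3,4,6\}$, $K$ is $\QQ(\zeta_3)$ or $\QQ(i)$ and $F=\QQ$. The displayed formula preceding the statement already gives the raw representative $1/\prod_j(1-\zeta_m^{a_j})$, so the proof consists of verifying the hypothesis and simplifying.

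\emph{Step 1 (the merging hypothesis).} For each label I reduce the $a_j$ modulo $m$. No $a_j$ is $\equiv 0$, because $\alpha_j\notin\ZZ_{\ge 0}$ and $\alpha_j>-1$, so every puncture has nontrivial monodromy. The condition $\sum_j a_j\equiv 0\pmod m$ follows from $\sum_j\alpha_j=0$.

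I then need an order of collisions in which every merged monodromy differs from $1$. I would merge successively into one accumulating puncture. This requires the partial products $\zeta_m^{a_{j_1}+\dots+a_{j_s}}$ to be $\neq 1$ for $2\le s\le r-2$. The last two punctures are then automatically $t,t^{-1}$ with $t\neq 1$.

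Since the $z_j$ can be moved (keeping nice position) without changing the isometry class of $(H_1,\Psi_\rho)$, the punctures may be relabelled, and I choose the order by hand for each label. For example, when many exponents equal $5$ modulo $6$, I accumulate them first, since $5s\not\equiv 0 \pmod 6$ for $s<6$. I expect this bookkeeping to be the main obstacle: one must rule out a proper subset of exponents summing to $0$ modulo $m$ that blocks every ordering, though with $r\le 6$ a direct check suffices.

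\emph{Step 2 (simplification).} I evaluate $\prod_j(1-\zeta_m^{a_j})^{-1}$ and remove norm factors. Any pair of exponents $a,-a$ contributes $(1-\zeta^a)(1-\zeta^{-a})=N_{K/\QQ}(1-\zeta^a)$, which is trivial in $\QQ^\times/N_{K/\QQ}(K^\times)$. More generally, squares of elements of $\QQ^\times$ are norms, so an element may be replaced by any element differing from it by a norm. Applying these moves brings each product to the representative listed in the table. The cases $a,i$ reduce to three copies of the exponent $2$ mod $3$, and $p$ to six copies of $5$ mod $6$.

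I will also cross-check a few representatives numerically. For instance, $1-\zeta_6^5$ has norm $1$, and $1-\zeta_6^3=2$. Such checks catch transcription errors from Table 1 of \cite{ghazouani2017moduli} before the comparison in the next subsection.
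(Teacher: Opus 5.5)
Your proposal is correct and matches the paper's proof: the paper likewise only checks that each datum admits a chain of merges in which every merged monodromy is $\neq 1$ (your accumulating-puncture check, using $5s\not\equiv 0 \pmod 6$ for $s<6$, settles this), and then applies Proposition \ref{prop:elliptic-determinant-reduction}. Your Step 2 is harmless but vacuous: each tabulated entry has exactly $r(\ell)$ factors, so it is already the raw product $\prod_{j}(1-\zeta_m^{a_j})^{-1}$ with exponents read mod $m$, and no norm cancellation is needed.
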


\begin{proof}
A direct computation shows that each datum in the Ghazouani--Pirio list can be successively merged until only two remain, with the monodromy produced at each merging step different from $1$.
Applying the formula in Proposition \ref{prop:elliptic-determinant-reduction}, we obtain the listed expressions. 
\end{proof}

\subsection{Determinant classes for Deligne--Mostow--Thurston tuples}

Recall that a Ghazouani--Pirio datum with $r$ punctures has ball dimension $r-1$ and a Deligne--Mostow--Thurston datum with $N$ points has ball dimension $N-3$. The two ball dimensions coincide when $r = N-2$.

The following lemma follows from the Hasse--Minkowski theorem. See, for instance, \cite[Chapter III, \S1--2, and Chapter IV, \S3.2]{serre1973course}.

\begin{lem}
\label{lem:rational-norm-criterion}
Let $q\in\QQ^\times$.
\begin{enumerate}[(1)]
\item The number $q$ lies in $N_{\QQ(i)/\QQ}(\QQ(i)^\times)$ if and only if $q=a^2+b^2$ for some $a,b\in\QQ$. Equivalently, $q>0$ and $v_p(q)$ is even for every prime $p\equiv 3\pmod 4$.
\item Note that $\QQ(\zeta_3) = \QQ(\sqrt{-3})$. The number $q$ lies in $N_{\QQ(\zeta_3)/\QQ}(\QQ(\zeta_3)^\times)$ if and only if $q=a^2+3b^2$ for some $a,b\in\QQ$. Equivalently, $q>0$ and $v_p(q)$ is even for every prime $p\equiv 2\pmod 3$.
\end{enumerate}
\end{lem}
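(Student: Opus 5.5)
The plan is to deduce both parts from the local–global description of the norm group for a quadratic extension $K=\QQ(\sqrt{-d})$ with $d\in\{1,3\}$. By Hasse's norm theorem, which is the Hasse--Minkowski theorem applied to the binary form $x^2+dy^2$, the element $q$ is a global norm from $K$ exactly when it is a local norm at every place. Since $N_{K/\QQ}(a+b\sqrt{-d})=a^2+db^2$, the first equivalence in each part is just the definition of the norm. The content of the lemma is therefore the second, arithmetic characterization. Equivalently, $q$ is a norm if and only if the ternary form $x^2+dy^2-qz^2$ is isotropic over $\QQ$. This holds if and only if the Hilbert symbol $(q,-d)_v=1$ at all places $v$, and we will cite Serre's Chapter IV, \S3.2 for this.

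We then compute $(q,-d)_v$ place by place, using Serre's Chapter III formulas.

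At $v=\infty$: since $-d<0$, we have $(q,-d)_\infty=1$ if and only if $q>0$.

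At an odd prime $p\nmid d$: write $q=p^{a}u$ with $u$ a unit. Then $(q,-d)_p=\left(\frac{-d}{p}\right)^{a}$. This equals $1$ for all $a$ when $-d$ is a square mod $p$, and requires $a$ even otherwise.
\begin{itemize}
\item For $d=1$: $-1$ is a nonsquare mod $p$ exactly when $p\equiv3\pmod 4$.
\item For $d=3$ and $p\neq 3$: by quadratic reciprocity $\left(\frac{-3}{p}\right)=\left(\frac{p}{3}\right)$, so $-3$ is a nonsquare exactly when $p\equiv2\pmod3$.
\end{itemize}
This gives the stated valuation condition at the unramified odd primes.

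The remaining places are the ramified or even primes: $p=2$ in both cases, and $p=3$ for $d=3$. Instead of computing these symbols directly, we use the product formula $\prod_v(q,-d)_v=1$. By the local analysis above, $(q,-d)_v=1$ at every place other than the one bad prime, namely $p=2$ for $d=1$ and $p=3$ for $d=3$. At $p=2$ for $d=3$, note that $2\equiv 2\pmod 3$ and $-3\equiv 5\pmod 8$ is a $2$-adic unit which is not a square. The condition that $v_2(q)$ be even is exactly what the Hilbert symbol formula at $2$ requires, since $(2,-3)_2=(-1)^{(9-1)/8}=-1$ and $(u,-3)_2=1$ for units $u$. So $p=2$ is handled by the displayed rule, and the only place not covered is $p=3$. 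In each case the product formula then forces $(q,-d)_v=1$ at that single remaining place, which completes the equivalence.

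The main obstacle is the careful Hilbert symbol computation at $p=2$ for $d=3$ using Serre's formula, together with correctly invoking the product formula to dispose of the one genuinely ramified prime. Everything else is routine.
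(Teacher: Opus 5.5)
Your proposal is correct and follows the route the paper indicates. The paper's proof is only a citation to the Hasse--Minkowski theorem and to Serre's Hilbert symbol material (Chapter III, \S1--2, and Chapter IV, \S3.2). Your place-by-place evaluation of $(q,-d)_v$ is exactly the argument those references supply: the direct check $(q,-3)_2=(-1)^{v_2(q)}$, and the product formula to handle the one remaining ramified prime in the ``if'' direction.
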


\begin{prop}
\label{prop:gafa-dm-comparison}
For each pair $(\ell,\mu)$ below, set
$q(\ell,\mu):=\det\Psi_\ell/\det\Psi_{\mathrm{DM}}(\mu)$.
In the odd ball dimensions, the determinant comparison with the arithmetic Deligne--Mostow--Thurston tuples over the same field is as follows.

\begin{enumerate}[(1)]
\item In dimension $3$ and field $\QQ(i)$, $\det\Psi_j$ has the same class as the determinants of $\frac{1}{4}(1,1,1,1,1,3)$ and $\frac{1}{4}(1,1,1,1,2,2)$.

\noindent
The corresponding values of $q(j,\mu)$, in the same order, are $2$ and
$4$.

\item In dimension $3$ and field $\QQ(\zeta_3)$, $\det\Psi_l=\det\Psi_m$ has the same class as the determinants of
$\frac{1}{3}(1,1,1,1,1,1)$,
$\frac{1}{6}(1,1,1,1,4,4)$,
$\frac{1}{6}(1,1,1,2,2,5)$,
$\frac{1}{6}(1,1,2,2,2,4)$ and
$\frac{1}{6}(1,1,2,2,3,3)$.

\noindent
The corresponding values of $q(l,\mu)=q(m,\mu)$ are $9,1,1,3,4$. 

\noindent
The determinant $\det\Psi_k$ has the same class as the determinants of
$\frac{1}{6}(1,1,1,1,3,5)$,
$\frac{1}{6}(1,1,1,2,3,4)$,
$\frac{1}{6}(1,1,1,3,3,3)$ and
$\frac{1}{6}(1,2,2,2,2,3)$.

\noindent
The corresponding values of $q(k,\mu)$ are $1,3,4,9$.

\item In dimension $5$ and field $\QQ(\zeta_3)$, $\det\Psi_p$ has the same class as the determinants of
$\frac{1}{6}(1,1,1,1,1,1,1,5)$,
$\frac{1}{6}(1,1,1,1,1,1,2,4)$,
$\frac{1}{6}(1,1,1,1,1,1,3,3)$ and
$\frac{1}{6}(1,1,1,1,2,2,2,2)$.

\noindent
The corresponding values of $q(p,\mu)$ are $1,3,4,9$. 
\end{enumerate}
\end{prop}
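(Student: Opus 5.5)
The plan is to compute both determinant classes explicitly, form the quotient $q(\ell,\mu)$, and check that it is a norm using Lemma \ref{lem:rational-norm-criterion}.

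\textbf{Ghazouani--Pirio side.} The representatives of $\det\Psi_\ell$ are already given in Proposition \ref{prop:gafa-arithmetic-determinants}.

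\textbf{Deligne--Mostow--Thurston side.} Write $\lambda_j=\zeta_d^{a_j}$ for $\mu=\frac1d(a_1,\dots,a_N)$. I would iterate the genus zero reduction formula from \S2.1, merging pairs of weights whose sum stays below $1$, until three weights remain. Each merge telescopes exactly as in the proof of Proposition \ref{prop:elliptic-determinant-reduction}. At the last stage one must evaluate the determinant of the explicit tridiagonal form for three points. Since that space is one-dimensional, the form reduces to the last collision, whose value is $\frac{1-\lambda\lambda'}{(1-\lambda)(1-\lambda')}$. Here $\lambda\lambda'=\overline{\lambda''}$ is the inverse of the third puncture monodromy.

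The result, after discarding norms such as $(1-\lambda'')(1-\overline{\lambda''})$, should be
\[
\det\Psi_{\mathrm{DM}}(\mu)\equiv \frac{c}{\prod_{j=1}^N(1-\lambda_j)}
\]
for a harmless correction factor $c$. If merging to three weights is obstructed by sums $\ge1$, I would instead quote the determinant values tabulated in \cite[\S9.2]{yu2024comm}, which were computed by the same method.

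\textbf{Computing the quotient.} For each pair $(\ell,\mu)$ in the statement, I compute
\[
q(\ell,\mu)=\det\Psi_\ell/\det\Psi_{\mathrm{DM}}(\mu)
\]
using $1-\zeta_4^2=2$, $1-\zeta_6^3=2$ and $1-\zeta_4^3=1+i$, and analogous closed forms in $\QQ(\zeta_6)$, where $|1-\zeta_6^{\pm1}|=1$ and $|1-\zeta_6^{\pm2}|^2=3$. Complex conjugate factors combine into norms, and the remaining factors $1-\zeta^{a}$ are compared to their conjugates via $1-\overline{\zeta}=-\overline{\zeta}(1-\zeta)$. The unit $-\overline{\zeta}$ is itself a norm up to roots of unity squared, which one checks case by case; for instance, $-\zeta_6^{k}$ is a square times a norm. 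Each quotient should reduce to a positive rational number: the listed values $2,4$ over $\QQ(i)$, and $1,3,4,9$ over $\QQ(\zeta_3)$.

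\textbf{Applying the norm criterion.} By Lemma \ref{lem:rational-norm-criterion}:
\begin{itemize}
\item $2=1^2+1^2$ and $4=2^2$ are norms from $\QQ(i)$.
\item $1$, $3=0^2+3\cdot1^2$, $4$ and $9$ are norms from $\QQ(\zeta_3)$.
\end{itemize}
Hence the determinant classes coincide, which is the assertion.

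\textbf{Main obstacle.} The delicate step is keeping track of the root-of-unity units and signs when simplifying each product into a rational number, so that the quotient really lands in $\QQ^\times$ and is positive. If necessary, I would multiply by explicit norms $(1-\zeta)(1-\overline\zeta)$ to symmetrize the expression before reading off $q$.
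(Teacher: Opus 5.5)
Your route is essentially the paper's. The paper also combines $\det\Psi_\ell\equiv 1/\prod_j(1-\rho_j)$ with $\det\Psi_{\mathrm{DM}}(\mu)\equiv 1/\prod_j(1-\zeta_d^{b_j})$, evaluates $q(\ell,\mu)$, and applies Lemma \ref{lem:rational-norm-criterion}. The only difference is that it quotes the genus-zero formula from \cite[\S 7.3]{yu2024comm*}, whereas you re-derive it by merging down to three points. That works: for the listed tuples a pair with sum $<1$ is always available.

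There are two things to fix.

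\textbf{The correction factor.} Your derivation already forces $c=1$. At the three-point stage the leftover numerator is $(1-\lambda\lambda')(1-\lambda'')=(1-\overline{\lambda''})(1-\lambda'')=N_{K/\QQ}(1-\lambda'')$, so nothing undetermined remains.

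\textbf{Discarding units.} More importantly, the justification that a unit such as $-\overline{\zeta}$ may be dropped because it is ``a square times a norm'' is wrong. Squares are not trivial in $K^\times/N_{K/F}(K^\times)$: for instance $-1=i^2$ is not a norm from $\QQ(i)$, since norms are positive. Dropping such a unit could flip exactly the sign that Lemma \ref{lem:rational-norm-criterion} tests.

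The remark is also unnecessary. In odd ball dimension both representatives are already rational numbers. For example, $(1+i)^{-4}=-\tfrac14$ for $j$, and $(1-i)^5(1+i)=-8$ for $\tfrac14(1,1,1,1,1,3)$. In the $\QQ(\zeta_3)$ cases, use $1-\zeta_6=\zeta_6^{5}$, $1-\zeta_6^{\pm 2}=\sqrt{3}\,e^{\mp \pi i/6}$ and $1-\zeta_6^3=2$. Then every product evaluates exactly, e.g.\ $\det\Psi_l\equiv -\tfrac13$, $\det\Psi_k\equiv -\tfrac12$ and $\det\Psi_p\equiv 1$. So compute each quotient exactly rather than modulo units; the listed positive values then come out directly.
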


\begin{proof}
For a Ghazouani--Pirio label $\ell$ and a Deligne--Mostow--Thurston tuple $\mu=\frac{1}{d}(b_1,\ldots,b_N)$, Propositions \ref{prop:elliptic-determinant-reduction} and \ref{prop:gafa-arithmetic-determinants}, together with \cite[\S 7.3]{yu2024comm}, give
\[
q(\ell,\mu) \equiv \frac{\prod_{j=1}^N(1-\zeta_d^{b_j})}{\prod_{j=1}^r(1-\rho_j(\ell))}.
\]
This formula gives the following values of $q(\ell,\mu)$, listed in the same order as the corresponding Deligne--Mostow--Thurston tuples in the statement:
\begin{center}
\begin{tabular}{cc}
\hline
$\ell$ & $q(\ell,\mu)$ \\
\hline
$j$   & $2,4$ \\
$l,m$ & $9,1,1,3,4$ \\
$k$   & $1,3,4,9$ \\
$p$   & $1,3,4,9$ \\
\hline
\end{tabular}
\end{center}
Now $2=N_{\QQ(i)/\QQ}(1+i)$ and $4=N_{\QQ(i)/\QQ}(2)$, while $1,3,4,9$ are norms from $\QQ(\zeta_3)$. Hence these quotients are trivial by Lemma \ref{lem:rational-norm-criterion}. 
\end{proof}

\subsection{Proof of the main theorem}

Let $\Gamma_\mu$ ($\Gamma_\ell$, respectively) denote the monodromy group for a Deligne--Mostow--Thurston tuple $\mu$ (a Ghazouani--Pirio label $\ell$, respectively). The twisted intersection forms $\Psi_{\mathrm{DM}}(\mu)$ and $\Psi_\ell$ are invariant under the corresponding monodromy actions and 
\[
\Gamma_\ell\subset\PU(\Psi_\ell),
\quad
\Gamma_\mu\subset\PU(\Psi_{\mathrm{DM}}(\mu)).
\]
For the labels and tuples considered in this paper, these are arithmetic subgroups of the respective $\QQ$-groups by \cite[\S 11.3]{ghazouani2017moduli} and \cite[\S 9.2]{yu2024comm}. 

\begin{proof}[Proof of Theorem \ref{thm:main}]
We first show that all groups appearing in the same row of Table \ref{tab:arithmetic-gp-dm-classification} are commensurable.
Suppose first that the ball dimension is even. The coincidence of the CM fields implies commensurability by Proposition \ref{prop:parity-arithmetic-criterion} (1). This proves the cases of dimension $2$ and $4$ in Table \ref{tab:arithmetic-gp-dm-classification}.

Now suppose that the ball dimension is odd. For every pair $(\ell,\mu)$ appearing in the same row of dimension $3$ or $5$, $\Psi_\ell$ and $\Psi_{\mathrm{DM}}(\mu)$ have the same determinant class by Proposition \ref{prop:gafa-dm-comparison}. They also have the same dimension. It follows from Proposition \ref{prop:parity-arithmetic-criterion} (2) that the corresponding monodromy groups are commensurable.

The two rows of dimension $3$ over $\QQ(\zeta_3)$ are distinct. Indeed, Proposition \ref{prop:gafa-arithmetic-determinants} gives
\[
\frac{\det\Psi_l}{\det\Psi_k}
=
\frac{\det\Psi_m}{\det\Psi_k}
=
\frac23.
\]
Since $v_2(2/3)=1$ and $2\equiv2\pmod 3$, this quotient is not a norm from $\QQ(\zeta_3)$ by Lemma \ref{lem:rational-norm-criterion}. Hence no group in one of these rows is commensurable with a group in the other.

For each $\ell$, we find a Deligne--Mostow--Thurston group commensurable with $\Gamma_\ell$. The commensurability classification in \cite[\S 9.2]{yu2024comm} gives exactly the tuples listed in the corresponding row of Table \ref{tab:arithmetic-gp-dm-classification}.
\end{proof}

\bibliography{ref}
\bibliographystyle{alpha}
\Addresses

\end{document}